\newcommand{\norm}[1]{\left\lVert{#1}\right\rVert}
\newtheorem{assumption}{Assumption}
\newtheorem{lemma}{Lemma}
\newtheorem{definition}{Definition}
\newtheorem{proposition}{Proposition}
\newcommand{\citep}[1]{\cite{#1}}
\title{\Large A Continuous Feedback Optimal Control based on Second-Variations for Problems with Control Constraints}
\author{Joris T. Olympio}
\date{}
\begin{document}

\maketitle

\begin{abstract}
The paper describes a continuous second-variation algorithm to solve optimal control problems where the control is defined on a closed set. A second order expansion of a Lagrangian provides linear updates of the control to construct a locally feedback optimal control of the problem. Since the process involves a backward and a forward stage, which require storing trajectories, a method has been devised to accurately store continuous solutions of ordinary differential equations. Thanks to the continuous approach, the method adapts implicitly the numerical time mesh. The novel method is demonstrated on bang-bang optimal control problems, showing the suitability of the method to identify automatically optimal switching points in the control.
\end{abstract}


\section{Introduction}
The current paper is concerned with the resolution of optimal control problems with bang-bang structure. 
Most optimal control problem solvers use a direct problem formulation\citep{Betts} that transforms the optimal control problem into a nonlinear program\citep{ross_dido} \citep{rao_gops}\citep{becerra_psopt}\citep{betts_socs}. Indeed, one could argue that direct methods offer the most straightforward formulation to a generic optimal control solver, and they are quite robust. Alternative methods, for instance indirect methods\cite{Pontryaguin}, require formulating a boundary value problem and convergence is usually difficult. Neither direct, nor indirect method-based algorithms can be both robust and accurate in computing the optimal control solution. Besides, direct and indirect approach, one should explore second-order methods.

Second-order methods appear as very good alternative. In the 1960's, while trying to elegantly solve LQP researchers devised second-order methods such as second-variation methods or differential dynamic programming (DDP)\citep{mcreynolds} \citep{jacobson} \cite{mayne} \citep{mitter} \citep{Gershwin} \citep{dyer}. The classical second order algorithms for optimal control problems use dynamic programming equations. Referred as differential dynamic programming (DDP), DDP provides a simple way of computing an optimal control through a quadratic expansion of the Hamilton-Jacobi-Bellman value function around a nominal trajectory\citep{jacobson}. Over the past 50 years, these methods have also been tested and adapted on non-LQP problems, and in the space community, DDP based methods seem to get a new lease of life. Current computer architecture allow better programing of the methods thanks to significant increase in memory capacity and processing unit speed. So far, Whiffen \citep{whiffen.2001}\citep{whiffen.2002} certainly proposed the best improvement of DDP method to provide a generic tool for high-fidelity low-thrust space trajectory optimisation. The major difficulty in low-thrust optimal control problems is the control is often of a bang-bang structure because the control is linear in the dynamics. 
Some work using DDP then consider simplifications in the dynamics\citep{lantoine}, or consider a quadratic cost to the designing of low-thrust trajectories to asteroids to improve convergence rate\citep{Colombo}. To the author knowledge there is not a lot of supporting theories for convergence and behaviour of DDP.\par
Second-order methods can also be devised using Lagrangian expansions, rather than the Bellman's value function. The approach has been introduced by Bullock\citep{bullock.rpt}\citep{bullock} and Bryson and  extended by others for space trajectory optimization\citep{Olympio.Phd}\citep{Olympio}. 
Although the basis is different from the derivation of DDP the end results are quite similar. Both DDP and gradient-based Lagrangian methods compute a linear control update iteratively using gradient of the Hamiltonian function. However, the derivations based on the expansion of the Lagrangian rather than the Hamilton-Jacobi-Bellman value function provide a better theoretical framework. \vspace{\baselineskip}

The algorithm presented in this paper uses as baseline an expansion of the Lagrangian. The novelty of the approach is however many fold. This paper proposes an approach for approximating the continuous solution through interpolation, thus improving the backward-forward integrations, and increasing the accuracy of the sensitivity matrices and the feedback coefficients. A method for handling dynamical control constraints without using penalization is presented. Penalization of the control or the state for constraints is one of the major drawback of current DDP and second-variation methods. That is, a regularisation of the Lagrangian is also presented to improve the convergence rate.

In section~\ref{sec:problem}, the main problem is introduced with notations and assumptions. 
The methodology is developed in section~\ref{sec:sogo}. This includes the second-order developments, the generation of an optimal feedback control law, the conditions of optimality, and the treatment of state constraints.
Specific section~\ref{sec:Control} presents the optimal control update, and deals with the particular case of control constraints, which is important for the bang-bang problem class. 
Section~\ref{sec:mesh} introduced a practical solution method to produce continuous solution for the problem, with an adaptive implicit mesh.
The algorithm of the method is presented in section~\ref{sec:algo}.
Eventually, the last section, section~\ref{sec:examples} illustrates the method, and demonstrates that the method can find the bang-bang optimal control without requiring any insights on the switching structure.

\section{Problem Statement} \label{sec:problem}

\subsection{Preliminaries}
To simplify notation, the following conventions are used. Scalars are denoted in italics, while column vectors, matrices and tensors are in bold. In addition, column vectors use lowercase and matrices use uppercase.
\par 
And for derivation, 
\begin{equation*}
\mathbf{F_{\mathbf{x}}} = \frac{\partial F}{\partial \mathbf{x}} \qquad \mathbf{F_{xy}} = \frac{\partial^2 F}{\partial \mathbf{x} \partial \mathbf{y}} 
\end{equation*}
where $F$ is a general scalar function. For clarity of the expressions, when the points of evaluation are omitted it is assumed that the expression is evaluated around a nominal trajectory $[\bar{\mathbf{x}}(t), \bar{\mathbf{u}}(t), \bar{p}]$. The dimension of the state $\mathbf{x}(t)$ is $n_x$, and the dimension of control $\mathbf{u}(t)$ is $n_u$.


\subsection{Formulation}

The developments include the case where a scalar constant parameter $p$ is included and needs to be optimised along with the control.\par
The dynamics are described by an ordinary differential equation
\begin{align}
    \frac{d \mathbf{x}}{dt} &= \mathbf{f}(\mathbf{x}, \mathbf{u}, p; t), \quad t \in [t_0, t_f], \quad \mathbf{u}(t) \in U, \quad p \in P
	\label{eq:dynamic} \\
	0 &= \bm{\varphi}(\mathbf{x}(t_0), \mathbf{x}_0,p_0) \label{eq:initial_conditions}
\end{align}
where $\bm{\varphi}:\mathbb{R}^{n_x} \times \mathbb{R} \rightarrow \mathbb{R}^{n_{\varphi}}$ describes initial conditions on the state at date $t_0$ and unknown parameter $p_0$. Functional $\mathbf{f}$, of state function $\mathbf{x} \in C([t_0, t_f], \mathbb{R}^{n_x})$ and unknown constant parameter $p \in P \subset \mathbb{R}$, is assumed at least twice continuously differentiable, $\mathbf{f} \in W^{2}_{2}(\mathbf{R}^{n_x} \times U \times P)$, with the Sobolev space
\begin{equation*}
	W^{k}_{p}(\Omega) = \{ f \in L^2(\Omega): \forall \alpha \in \mathbb{N}, |\alpha| \leq k, \partial^{\alpha} f \in L^p(\Omega)\} 
\end{equation*}

Furthermore, the control is assumed a bounded measurable function, $\mathbf{u} \in L^2([t_0, t_f], U)$. Let the control region $U \subset \mathbb{R}^{n_u}$ be defined by the box constraints:
\begin{equation}
	u_i^l \leq u_i \leq u_i^u		\quad 1 \leq i \leq n_u
	\label{eq:control_box_constraint}
\end{equation}
where $u_i^l \in \mathbb{R}$ and $u_i^u \in \mathbb{R}$ are respectively the lower and upper bounds on the $i^{th}$ component of the control. 

Let terminal constraints be defined as
\begin{equation}
    \bm{\psi}(\mathbf{x}(t_f), p_f) = 0
	\label{eq:terminal_constraint}
\end{equation}
where $\bm{\psi}:\mathbb{R}^{n_x} \times \mathbb{R} \rightarrow \mathbb{R}^{n_{\psi}}$ is assumed twice continuously differentiable. Scalar $p_f$ is an unknown parameter. With very few modifications, these problem parameters can also be slack variables of possible terminal inequality constraints.

Consider the minimisation of an objective function $J$ written under the Mayer form,
\begin{equation}
    J = J(\mathbf{x}(t_f), p_f) \longrightarrow \min
	\label{eq:objective_function}
\end{equation}
where $J:\mathbb{R}^{n_x} \times \mathbb{R} \rightarrow \mathbb{R}$ is a functional of the vector $\mathbf{x}(t_f)$ and the parameter $p$. It is assumed well defined (Jacobian of maximum rank), and at least twice continuously differentiable. \par

A second order method is presented to minimise the objective function $J$, Eq.~(\ref{eq:objective_function}), subjects to the state constraints Eqs.~(\ref{eq:initial_conditions}) and (\ref{eq:terminal_constraint}), and the control constraint Eq.~(\ref{eq:control_box_constraint}), with iterative updates on the control $\mathbf{u}(t)$ and the parameters $p$,$p_0$ and $p_f$.

\section{Second Order Gradient Method} \label{sec:sogo}
\subsection{Second Order Equations}


For this problem, an extended terminal value function is introduced,
\begin{equation}
    \phi(\mathbf{x}, \bm{\nu}, p_f; t_f) =  J(\mathbf{x}(t_f), p_f) + \bm{\nu}^T \bm{\psi}(\mathbf{x}(t_f), p_f) + \bm{\psi}(\mathbf{x}(t_f), p_f)^T \mathbf{C} \bm{\psi}(\mathbf{x}(t_f), p_f)
\label{eq:value_function}
\end{equation}
where $\bm{\nu} \in \mathbb{R}^{n_{\psi}}$ is a non-zero constant Lagrange multiplier vector for the constraints $\bm{\psi}$, and $\mathbf{C} > 0$ is a square diagonal regularisation matrix, $\mathbf{C} \in M^{n_{\psi}, n_{\psi}}(\mathbb{R})$. 
Let the augmented Lagrangian $\mathcal{L} \in W^{2}_{2}(\mathbb{R}^{nx} \times U \times P \times \mathbf{R}^2 \times \mathbb{R}^{n_{\psi}} )$ of the constrained optimal control problem be defined with 
\begin{equation}
	\mathcal{L}(\mathbf{x}, \mathbf{u}, p, p_0, p_f, \bm{\nu}) = \phi(\mathbf{x}, \bm{\nu}, p_f; t_f) + \bm{\eta}^T \bm{\varphi}(\mathbf{x}_0, p_0; t_0) + \int_{t_0}^{t_f}{ \bm{\lambda}^T \left( \mathbf{f}(\mathbf{x}, \mathbf{u}, p; t) - \frac{d \mathbf{x}}{dt} \right) dt}
\label{eq:augmented_lagrangian}
\end{equation}
It accounts for initial conditions Eq.~(\ref{eq:initial_conditions}), terminal constraints Eq.~(\ref{eq:terminal_constraint}) and objective function Eq.~(\ref{eq:objective_function}). Costate vector $\bm{\lambda}$ and Lagrange multiplier $\bm{\nu}$ have been introduced for the dynamics and the initial constraints Eq.~(\ref{eq:initial_conditions}) respectively.

While most second-order methods published to date only use penalisation to reach constraint satisfaction, the presented method introduces Lagrange multipliers to have a quicker and better satisfaction of the constraints. As part of a min-max problem, $V$ should be minimised for $\mathbf{u}$ and $p$, while maximised for $\bm{\nu}$. Indeed, the constrained maximisation supposes the existence of a min-max and max-min problem, or also the existence of a saddle point. The regularisation matrix $\mathbf{C}$ of the augmented Lagrangian (Eq.~(\ref{eq:augmented_lagrangian})) is thus used to regularise the Lagrangian and find a saddle point of $\mathbf{L}$ by reducing any duality gap.
Gill\cite{gill.technote}, Hestenes\cite{hestenes}, Powell\cite{powell} and others have demonstrated that solving the extended function of the unconstrained problem Eq.~(\ref{eq:value_function}) is equivalent to solving the constrained problem Eqs.(\ref{eq:terminal_constraint} - \ref{eq:objective_function}).\par 
\vspace{\baselineskip}

Let the Hamiltonian $H \in W^{2}_{2}(\mathbb{R}^{nx} \times \mathbb{R}^{nx} \times U \times \mathbb{R} )$ be defined as:
\begin{equation}
	H(\mathbf{x}, \bm{\lambda}, \mathbf{u}, p; t) = \bm{\lambda}(t)^T \mathbf{f}(\mathbf{x}, \mathbf{u}, p; t)
\end{equation}
since the objective function $J$ (Eq.~\ref{eq:objective_function}) does not include Lagrangian terms (integral terms).

Looking for control updates $\delta \mathbf{u}$, the unknowns of the problem are $\delta \mathbf{x}$, $d p$ and $d \bm{\nu}$, such that a control update can be expressed in the form
\begin{align}
\delta \mathbf{u} &= \bm{\alpha} + \bm{\beta} \delta \mathbf{x} + \bm{\gamma} dp + \bm{\omega} d \bm{\nu} \label{eq:feedback_control} \\
	\mathbf{u} &= \mathbf{\bar{u}} + \epsilon_u \delta \mathbf{u} \label{eq:control_update}
\end{align}
with $\bm{\alpha} \in \mathbb{R}^{n_u}$, $\bm{\beta}\in M^{n_u, n_x}(\mathbb{R})$, $\bm{\gamma} \in \mathbb{R}^{n_u}$ and $\bm{\omega} \in M^{n_u, n_{\psi}}(\mathbb{R})$. $\mathbf{\bar{u}}$ is the current nominal control and $\epsilon_u \in [0, 1]$ is an update factor. This control update can be seen as a feedback control law on the state $\mathbf{x}$.\par 

Since $J$, $\mathbf{f}$, $\bm{\varphi}$ and $\bm{\psi}$ are assumed at least twice continuously differentiable, the development of the Lagrangian $\mathcal{L} \in W^{2}_{2}(\mathbb{R}^{n_x} \times U \times \mathbb{R})$ to second order, around the nominal trajectory $\bar{\mathbf{x}}$ and control $\bar{\mathbf{u}}$ and nominal parameter values,  yields, 

\begin{equation}
	\begin{split}
	d \mathcal{L}(\bar{\mathbf{x}}, \bar{\mathbf{u}}, p, p_0, p_f, \bm{\nu}) &= 
		  d \phi + d \left(\bm{\eta}^T \bm{\varphi} \right)\\
		&- \bm{\lambda}_f^T \delta \mathbf{x}_f + \bm{\lambda}_0^T \delta \mathbf{x}_0 \\
		&- \delta \bm{\lambda}_f^T \delta \mathbf{x}_f + \delta \bm{\lambda}_0^T \delta \mathbf{x}_0 \\
		&+ \int_{t_0}^{t_f}{ \left( \left( \frac{d \bm{\lambda} + \delta \bm{\lambda}}{dt} \right)^T \delta \mathbf{x} 
		- \delta \bm{\lambda}^T \frac{d \mathbf{x}}{dt} \right) dt} 
		+ \int_{t_0}^{t_f}{ dH dt}\\
		&+ o(\norm{\delta \mathbf{x}}^2, \norm{\delta \bm{\lambda}}^2, dp^2)
\end{split}
\label{eq:scddevel}
\end{equation}

with:
\begin{equation}
	\begin{split}
   dH &= \mathbf{H_x} \delta \mathbf{x} + \mathbf{H_u} \delta \mathbf{u} + H_p dp + \mathbf{f} \delta \bm{\lambda} \\
		&+ \frac{1}{2} \delta \mathbf{x}^T \mathbf{H_{xx}} \delta \mathbf{x} + \frac{1}{2} \delta \mathbf{u}^T \mathbf{H_{uu}} \delta \mathbf{u} + \frac{1}{2} H_{pp} dp^2\\
		&+ \delta \mathbf{u}^T \mathbf{H_{ux}} \delta \mathbf{x} + \delta \mathbf{x}^T \mathbf{H_{xu}} \delta \mathbf{u} + \delta \mathbf{x}^T \mathbf{H_{xp}} dp + H_{px} \delta \mathbf{x} dp\\
		&+ \delta \mathbf{u}^T \mathbf{H_{up}} dp + \mathbf{H_{pu}} \delta \mathbf{u} dp\\
		&+ \delta \bm{\lambda}^T \mathbf{f}_{x} \delta \mathbf{x} + \delta \bm{\lambda}^T \mathbf{f}_{u} \delta \mathbf{u} + \delta \bm{\lambda}^T \mathbf{f}_{p} dp\\
		&+ \delta \mathbf{x}^T \mathbf{f}_{x} \delta \bm{\lambda} + \delta \mathbf{u}^T \mathbf{f}_{u} \delta \bm{\lambda} + \mathbf{f}_{p} \delta \bm{\lambda} dp
	\label{eq:devel_hamiltonian}
	\end{split}
\end{equation}

Residual term $o(\norm{\delta \mathbf{x}}^2, \norm{\delta \bm{\lambda}}^2, dp^2)$ describes the higher order terms of the development, and,
\begin{align}
o(\norm{\delta \mathbf{x}}^2, \norm{\delta \bm{\lambda}}^2, dp^2) &= \Delta \mathcal{L}
- d\mathcal{L}(\mathbf{x}, \mathbf{u}, p, p_0, p_f, \bm{\nu}) \\
\Delta \mathcal{L} &= \mathcal{L}(\bar{\mathbf{x}}, \bar{\mathbf{u}}, \bar{p}, \bar{p_0}, \bar{p_f}, \bar{\bm{\nu}}) - \mathcal{L}(\mathbf{x}, \mathbf{u}, p, p_0, p_f, \bm{\nu}) \label{eq:DeltaLagrangian}
\end{align}

$\Delta \mathcal{L}$ is a measure of the improvement owing to the updated control law (Eq.~(\ref{eq:control_update})). The second term of the right hand side must thus be of the same order of magnitude of $\Delta \mathcal{L}$. For linear or  quadratic problems in $\mathbf{x}$, $\mathbf{u}$ and $p$, the residual term is zero.

\begin{lemma}[Riccati Transformation]\label{lemma:costate}
The costate vector is related to the other variables with the transformation,
\begin{equation}
	\delta \bm{\lambda} = \mathbf{R} \delta \mathbf{x} + \mathbf{K} d\bm{\nu} + \mathbf{T} dp 
\label{eq:trf_lambda}
\end{equation}
with $\mathbf{R} \in \mathbb{R}^{n_x, n_x}(\mathbb{R})$, $\mathbf{K} \in \mathbb{R}^{n_x, n_{\phi}}(\mathbb{R})$ and  $\mathbf{T} \in \mathbb{R}^{1, n_x}(\mathbb{R})$. 
\end{lemma}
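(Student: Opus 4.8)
The plan is to obtain the transformation by the classical backward sweep argument applied to the linearized optimality system, rather than by positing it outright. First I would collect the first-order necessary conditions that the nominal trajectory must satisfy for $d\mathcal{L}$ in Eq.~(\ref{eq:scddevel}) to be stationary: matching the boundary terms and the integrand gives the costate dynamics $\dot{\bm{\lambda}} = -\mathbf{H_x}^T$, the transversality condition $\bm{\lambda}_f = \phi_{\mathbf{x}}^T$ at $t_f$, the adjoint relation at $t_0$ involving $\bm{\eta}$ and $\bm{\varphi}_{\mathbf{x}_0}$, and the stationarity of $H$ in $\mathbf{u}$. Linearizing these about $[\bar{\mathbf{x}},\bar{\mathbf{u}},\bar p]$ yields a linear two-point boundary value problem for $(\delta\mathbf{x},\delta\bm{\lambda})$ driven by the constant perturbations $d\bm{\nu}$ and $dp$: $\dot{\delta\mathbf{x}} = \mathbf{f_x}\,\delta\mathbf{x} + \mathbf{f_u}\,\delta\mathbf{u} + \mathbf{f_p}\,dp$ and $\dot{\delta\bm{\lambda}} = -\mathbf{H_{xx}}\,\delta\mathbf{x} - \mathbf{H_{xu}}\,\delta\mathbf{u} - \mathbf{f_x}^T\delta\bm{\lambda} - \mathbf{H_{xp}}\,dp$, with terminal condition $\delta\bm{\lambda}_f = \phi_{\mathbf{xx}}\,\delta\mathbf{x}_f + \bm{\psi}_{\mathbf{x}}^T d\bm{\nu} + \phi_{\mathbf{x}p_f}\,dp_f$ read off from the second-order expansion of the transversality relation.

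Second, I would eliminate $\delta\mathbf{u}$. First-order stationarity of $H$ (restricted to the control components that are not at a box bound, per Section~\ref{sec:Control}) reads $\mathbf{H_{uu}}\,\delta\mathbf{u} + \mathbf{H_{ux}}\,\delta\mathbf{x} + \mathbf{f_u}^T\delta\bm{\lambda} + \mathbf{H_{up}}\,dp = 0$; wherever the regularized $\mathbf{H_{uu}}$ is invertible this gives $\delta\mathbf{u}$ as a linear function of $\delta\mathbf{x}$, $\delta\bm{\lambda}$ and $dp$, consistent with the feedback form Eq.~(\ref{eq:feedback_control}) once $\delta\bm{\lambda}$ is expressed through the sought transformation. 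Substituting this back leaves a homogeneous linear system in $(\delta\mathbf{x},\delta\bm{\lambda})$ with $d\bm{\nu}$ and $dp$ as parameters.

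Third, and at the heart of the argument, I would postulate $\delta\bm{\lambda}(t) = \mathbf{R}(t)\,\delta\mathbf{x}(t) + \mathbf{K}(t)\,d\bm{\nu} + \mathbf{T}(t)\,dp$, differentiate in $t$, and insert the two linear ODEs above. Since $d\bm{\nu}$ and $dp$ are constant in time and $\delta\mathbf{x}(t)$ may be taken to be an arbitrary solution of the linearized state equation (its value being free up to the linearized initial constraint), the coefficients of $\delta\mathbf{x}$, of $d\bm{\nu}$ and of $dp$ must vanish separately. This produces a matrix Riccati differential equation for $\mathbf{R}$, a linear equation for $\mathbf{K}$ forced by $\mathbf{R}$, and a linear equation for $\mathbf{T}$ forced by $\mathbf{R}$, with terminal data $\mathbf{R}(t_f)=\phi_{\mathbf{xx}}$, $\mathbf{K}(t_f)=\bm{\psi}_{\mathbf{x}}^T$, $\mathbf{T}(t_f)=\phi_{\mathbf{x}p_f}$. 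Integrating backward from $t_f$ shows the ansatz is self-consistent, which establishes existence of the transformation with the stated dimensions for $\mathbf{R}$, $\mathbf{K}$ and $\mathbf{T}$.

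The main obstacle I anticipate is not the bookkeeping but the well-posedness of the backward integration on all of $[t_0,t_f]$: the Riccati equation for $\mathbf{R}$ can exhibit a conjugate point (finite escape) unless $\mathbf{H_{uu}}$ is positive definite, which is precisely why the regularization matrix $\mathbf{C}$ in Eq.~(\ref{eq:value_function}) and the convexification of $\mathbf{H_{uu}}$ described in Section~\ref{sec:sogo} are introduced; with those in force the Riccati solution stays bounded and the transformation is defined on the whole interval. A secondary subtlety is the treatment of control components lying on the boundary of $U$: there $\delta\mathbf{u}$ is constrained rather than determined by stationarity, so the elimination of $\delta\mathbf{u}$, and hence the definition of the coefficient matrices, must be carried out on the complementary \emph{free} subspace, which is exactly where the control-constraint handling of Section~\ref{sec:Control} enters.
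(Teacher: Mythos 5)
Your argument is correct, but it follows a genuinely different route from the paper's own proof. The paper establishes Eq.~(\ref{eq:trf_lambda}) by writing the solution of the linearized Hamiltonian system through the blocks of the state transition matrix $\bm{\Phi}(t_f,t)$ (Eqs.~(\ref{app:stm_x})--(\ref{app:stm_p})), injecting the transversality condition for $\bm{\lambda}(t_f)$, and algebraically solving for $\delta\bm{\lambda}(t)$; this yields explicit closed-form expressions for $\mathbf{R}$, $\mathbf{K}$ and $\mathbf{T}$ in terms of $\bm{\Phi}_{ij}$, at the price of requiring invertibility of $\mathbf{I}-\bm{\Phi}_{21}\bm{\Phi}_{12}$ (which is the transition-matrix face of the same conjugate-point issue you raise). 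You instead take the classical sweep route: postulate the affine ansatz, differentiate along the linearized two-point boundary value problem, eliminate $\delta\mathbf{u}$ through the regularized stationarity condition, and match the coefficients of $\delta\mathbf{x}$, $d\bm{\nu}$ and $dp$ separately. That derivation is equally valid and arguably more informative in the context of this paper, because it produces as a by-product exactly the backward ODEs (\ref{eq:odeR})--(\ref{eq:odeT}) and their terminal conditions (\ref{eq:terminal_conditions_R}), (\ref{eq:terminal_conditions_K}), (\ref{eq:terminal_conditions_T}), which the paper introduces only after the lemma and does not rederive; it also makes explicit where Assumption~\ref{ass:solution_boundedness} (positive definiteness of $\mathbf{H_{uu}}$ and the attendant boundedness of the Riccati solution) is actually needed, whereas the paper's proof leaves the invertibility of the transition-matrix combination unjustified. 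Your two anticipated obstacles --- finite escape of the Riccati equation and the restriction of the control elimination to the components not pinned at the box bounds --- are precisely the points the paper handles elsewhere (the Trust-Region shift $\mathbf{S_{uu}}$ and the slack-variable transformation $m(v_i)$ of Section~\ref{sec:Control}), so flagging them does not leave a gap in the lemma itself.
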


\begin{proof}
The state transition matrix $\bm{\Phi}(t_f, t)$ for the dynamics $\mathbf{f}$ yields, 
\begin{align}
	\label{app:stm_x}
	\mathbf{x}(t_f) &= \bm{\Phi}_{11}(t_f, t) \mathbf{x}(t) + \bm{\Phi}_{12}(t_f, t) \bm{\lambda}(t) + \bm{\Phi}_{13}(t_f, t) p\\
	\label{app:stm_l}
	\bm{\lambda}(t) &= \bm{\Phi}_{21}(t, t_f) \mathbf{x}(t_f) + \bm{\Phi}_{22}(t, t_f) \bm{\lambda}(t_f) + \bm{\Phi}_{23}(t, t_f) p\\
	\label{app:stm_p}
	p &= \bm{\Phi}_{31}(t, t_f) \mathbf{x}(t_f) + \bm{\Phi}_{32}(t, t_f) \bm{\lambda}(t_f) + \bm{\Phi}_{33}(t, t_f) p
\end{align}

The transversality conditions for the terminal constraints provide the value for $\bm{\lambda}(t_f)$. The transition matrix is invertible. Combining equations~(\ref{app:stm_x}) and (\ref{app:stm_l}), and considering small variations of the variables $\mathbf{x}$,  $p$ and $\bm{\nu}$ yield:
\begin{equation*}
	\begin{split}
	\delta \bm{\lambda}(t) &= (1 - \bm{\Phi}_{21}(t, t_f) \bm{\Phi}_{12}(t_f, t))^{-1} \bm{\Phi}_{21}(t, t_f) \bm{\Phi}_{11}(t_f, t) \delta \mathbf{x}(t) \\
				&+ (1 - \bm{\Phi}_{21}(t, t_f) \bm{\Phi}_{12}(t_f, t))^{-1} (\bm{\Phi}_{21}(t, t_f) \bm{\Phi}_{13}(t_f, t) + \bm{\Phi}_{23}(t, t_f) ) \delta p \\
				&+ (1 - \bm{\Phi}_{21}(t, t_f) \bm{\Phi}_{12}(t_f, t))^{-1} \bm{\Phi}_{22}(t, t_f) \psi_{\mathbf{x}_f}^T d \bm{\nu}
	\end{split}
\end{equation*}
\end{proof}

\begin{assumption}\label{ass:solution_boundedness}
Consider a nominal trajectory $\bar{\mathbf{x}}$ and a nominal control $\bar{\mathbf{u}}$, possibly not optimal. The solutions of Eqs.~(\ref{eq:odeR}, \ref{eq:odeK},\ref{eq:odeT}) are bounded with the following assumptions\citep{kalman}\cite{bucy.1967}\citep{jacobson.ricatti}:
\begin{itemize}
	\item  Legendre-Clebsch condition $\mathbf{H_{uu}} > 0$. This discards any problems where the control appears linearly. A procedure that guarantees this condition in the method shall be presented hereafter.
	\item $\mathbf{H_{xx}} - \mathbf{H_{xu}} \mathbf{H_{uu}^{-1}} \mathbf{H_{ux}}$ is positive-semidefinite.
	\item $\mathbf{R}(t_f)$ is positive-semidefinite
\end{itemize}
\end{assumption}

The first assumption is a necessary condition to ensure that no singular arcs are encounter. However, this has no influence on the local optimality of the final solution.\par

\begin{definition}\label{def:trustregion}
Considering a quadratic model $(\mathbf{Q}, \mathbf{R})$, the Trust-Region problem consists in minimising the quadratic expansion for the solution to be within a given ball $B(\mathbf{x}, r)$,
\begin{equation*}
	\begin{aligned}
		\min_{\mathbf{p}}&{ \left( \mathbf{f} + \mathbf{p}^T\mathbf{R} + \frac{1}{2} \mathbf{p}^T \mathbf{Q} \mathbf{p}\right)} \\
		s.t.& \\
		&\norm{\mathbf{p}} \leq \Delta
	\end{aligned}
\end{equation*}
where $\mathbf{p}=\delta \mathbf{x}$ and $\Delta$ is called the Trust-Region radius. 
Furthermore, the solution $\mathbf{p^*}$ is solution of linear problem
\begin{equation*}
	\left( \mathbf{Q} + \lambda \mathbf{I} \right) \mathbf{p} = -\mathbf{R}
\end{equation*}
for some $\lambda \geq 0$ such that $\mathbf{Q} + \lambda \mathbf{I}$ is positive definite.
\end{definition}

Applying the Trust-Region algorithm\citep{Rodriguez97trustregion} \citep{Conn.TrustRegion}\citep{Coleman93anefficient} results then in the computation of a shift matrix $\mathbf{S_{uu}} = \lambda \mathbf{I}$ such that  the Hessian $\mathbf{H_{uu}} \leftarrow \mathbf{H_{uu}} + \lambda \mathbf{I}$ is positive-definite. A counterpart of applying the Trust-Region method to correct the Hessian is a limitation of the control update amplitude $\delta \mathbf{u}$ in accordance to the selection of Trust-Region radius $\Delta$, which eventually has an influence on result of Eq.~(\ref{eq:DeltaLagrangian}).\par

Transformation of Eq.~(\ref{eq:trf_lambda}) allows to compute the feedback coefficients in Eq.~	(\ref{eq:feedback_control}). This yields, assuming $\mathbf{H_{uu}}>0$, 
\begin{align}
\bm{\alpha} &= -\mathbf{H_{uu}}^{-1} \mathbf{H_u}\\
\bm{\beta} &=  -\mathbf{H_{uu}}^{-1} \left( \mathbf{H_{xu}} + \mathbf{R}^T \mathbf{f}_u \right)^T\\
\bm{\gamma} &=  -\mathbf{H_{uu}}^{-1} \left( \mathbf{H}_{p \mathbf{u}} + \mathbf{T}^T \mathbf{f}_u \right)^T\\
\bm{\omega} &=  -\mathbf{H_{uu}}^{-1} \left(\mathbf{K}^T \mathbf{f_u} \right)^T
\end{align}

And, using the conditions of optimality, ordinary differential equations for $\mathbf{R}$, $\mathbf{K}$, $\mathbf{T}$ and the costate vector $\bm{\lambda}$ are obtained.
Thus, 
\begin{align}
	{\frac{d \bm{\lambda}}{dt}}^T &= -\mathbf{H_x} + \mathbf{H_u} \mathbf{H_{uu}}^{-1} \left( \mathbf{H_{ux}} + \bm{H_{u\lambda}} \mathbf{R} \right)\\
	{\frac{d \mathbf{x}}{dt}} &= \mathbf{H_{\bm\lambda}}^T = \mathbf{f}\\
-\frac{d \mathbf{R}}{dt} &= \mathbf{H_{xx}} + \mathbf{R}^T \mathbf{f}_{x} + \mathbf{f}_{x} \mathbf{R} + \bm{\beta}^T \left(\mathbf{H_{ux}} + \mathbf{f}_u \mathbf{R} \right)
\label{eq:odeR}\\
-\frac{d \mathbf{K}}{dt} &= \mathbf{K}^T \mathbf{f}_{x} + \bm{\omega}^T \left( \mathbf{H_{ux}} + \mathbf{f}_u \mathbf{R} \right)
\label{eq:odeK}\\
-\frac{d \mathbf{T}}{dt} &= \mathbf{H_{px}} + \mathbf{T}^T \mathbf{f}_{x} + \bm{\gamma}^T \left( \mathbf{H_{ux}} + \mathbf{f}_u \mathbf{R} \right) + \mathbf{f}_p \mathbf{R}
\label{eq:odeT}
\end{align}

These ODEs allow of computing a bounded update $\delta \mathbf{u}$ of the current nominal control $\bar{\mathbf{u}}$ through Eq.~(\ref{eq:feedback_control}). If the truncation error defined by Eq.~(\ref{eq:DeltaLagrangian}) is limited, this control update should minimise the Lagrangian of the problem and to some extent the terminal constraints violation and/or the terminal cost $J$.\par 
Additionally, to decrease the constraint violation, an update of the Lagrange multiplier $\bm{\nu}$ should be provided, and similarly, an update for the parameter $p$ should be produced to minimize the Lagrangian. 

\subsection{Parameter and Lagrangian Vector Updates}
\begin{proposition}\label{prop:transformation}
The following transformations are considered
\begin{align}
	d \bm{\psi} &= \mathbf{K}^T \delta \mathbf{x} + \mathbf{Q} d\bm{\nu} + \mathbf{V} dp \label{eq:trf_psi}\\
	d \bm{\tau^p} &= \mathbf{T}^T \delta \mathbf{x} + \mathbf{V}^T d\bm{\nu} + \mathbf{W} dp \label{eq:trf_tau}
\end{align}
\end{proposition}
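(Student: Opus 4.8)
The plan is to establish (\ref{eq:trf_psi})--(\ref{eq:trf_tau}) as the exact counterpart, for the terminal-constraint sensitivity $d\bm{\psi}$ and the parameter sensitivity $d\bm{\tau^p}$, of the Riccati transformation of Lemma~\ref{lemma:costate} for the costate $\delta\bm{\lambda}$. Here $\bm{\tau^p}$ plays for the scalar $p$ the role that $\bm{\lambda}$ plays for $\mathbf{x}$ (the backward-accumulated $p$-sensitivity of the Lagrangian, with value at $t_f$ fixed by $\partial\phi/\partial p_f$); whatever its precise definition, the argument only uses that it is a smooth functional of the terminal data of $\phi$ and of the accumulated Hamiltonian along the trajectory. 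What has to be shown is that the coefficient matrices $\mathbf{Q} \in M^{n_\psi,n_\psi}(\mathbb{R})$, $\mathbf{V} \in M^{n_\psi,1}(\mathbb{R})$ and the scalar $\mathbf{W}$ are well defined on $[t_0,t_f]$, that the $\mathbf{K}$ and $\mathbf{T}$ written here are literally the objects produced by (\ref{eq:odeK}) and (\ref{eq:odeT}), and that $\mathbf{V}$ enters the two relations transposed of one another. Two routes are available. The integral one is the route already used in the proof of Lemma~\ref{lemma:costate}: start from $d\bm{\psi} = \bm{\psi}_{\mathbf{x}_f}\delta\mathbf{x}(t_f) + \bm{\psi}_{p_f}dp_f$ and the analogous terminal expression for $d\bm{\tau^p}$, substitute $\delta\mathbf{x}(t_f)$ and $\delta\bm{\lambda}(t_f)$ through the transition blocks $\bm{\Phi}_{ij}$ as in (\ref{app:stm_x})--(\ref{app:stm_p}), and then eliminate $\delta\bm{\lambda}(t)$ with $\delta\bm{\lambda}(t) = \mathbf{R}\delta\mathbf{x} + \mathbf{K}d\bm{\nu} + \mathbf{T}dp$ from (\ref{eq:trf_lambda}); everything becomes affine in $(\delta\mathbf{x}(t), d\bm{\nu}, dp)$ and the coefficients are, by inspection, $\mathbf{K}^T$, $\mathbf{Q}$, $\mathbf{V}$ for $d\bm{\psi}$ and $\mathbf{T}^T$, $\mathbf{V}^T$, $\mathbf{W}$ for $d\bm{\tau^p}$.

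The differential route is the one I would actually carry out, because it also yields the propagation equations. Posit (\ref{eq:trf_psi})--(\ref{eq:trf_tau}) as identities holding for every $t\in[t_0,t_f]$; this is legitimate because under the feedback law (\ref{eq:feedback_control}) the perturbation $\delta\mathbf{x}(t)$ is driven deterministically by $(d\bm{\nu},dp)$, so the right-hand sides are conserved along the linearized feedback system and hence equal their terminal values $d\bm{\psi}$ and $d\bm{\tau^p}$. Differentiate in $t$, insert $d(\delta\mathbf{x})/dt = \mathbf{f}_x\delta\mathbf{x} + \mathbf{f}_u\delta\mathbf{u} + \mathbf{f}_p\,dp$ with $\delta\mathbf{u}$ from (\ref{eq:feedback_control}), and require the result to vanish for arbitrary $(\delta\mathbf{x}, d\bm{\nu}, dp)$. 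The coefficient of $\delta\mathbf{x}$ in the first relation reproduces (\ref{eq:odeK}) verbatim (so $\mathbf{K}$ here is the $\mathbf{K}$ of Lemma~\ref{lemma:costate}) and the coefficient of $\delta\mathbf{x}$ in the second reproduces (\ref{eq:odeT}) for $\mathbf{T}$; the coefficients of $d\bm{\nu}$ and $dp$ give backward ODEs of the same Riccati type for $\mathbf{Q}$, $\mathbf{V}$, $\mathbf{W}$, schematically (up to the transpose conventions of the paper) $-\dot{\mathbf{Q}} = \mathbf{K}^T\mathbf{f}_u\bm{\omega}$, $-\dot{\mathbf{V}} = \mathbf{K}^T(\mathbf{f}_p + \mathbf{f}_u\bm{\gamma})$, $-\dot{\mathbf{W}} = H_{pp} + \mathbf{f}_p(\mathbf{T}+\mathbf{T}^T) + \bm{\gamma}^T(\mathbf{H}_{\mathbf{u}p} + \mathbf{f}_u\mathbf{T})$, integrated from the terminal data $\mathbf{K}(t_f)=\bm{\psi}_{\mathbf{x}_f}^T$, $\mathbf{Q}(t_f)=\mathbf{0}$ (since $\bm{\psi}$ does not depend on $\bm{\nu}$), and $\mathbf{V}(t_f)$, $\mathbf{W}(t_f)$ read off from $\bm{\psi}_{p_f}$ and $\phi_{p_f p_f}$. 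Boundedness of these solutions on $[t_0,t_f]$ then follows from Assumption~\ref{ass:solution_boundedness}, exactly as for $\mathbf{R}$, $\mathbf{K}$, $\mathbf{T}$.

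The claim that $\mathbf{V}$ occurs in (\ref{eq:trf_psi}) and $\mathbf{V}^T$ in (\ref{eq:trf_tau}) is not an assumption but a consequence of the self-adjoint structure of the linearized problem. The stacked sensitivity matrix $\bigl(\begin{smallmatrix} \mathbf{R} & \mathbf{K} & \mathbf{T}^T \\ \mathbf{K}^T & \mathbf{Q} & \mathbf{V} \\ \mathbf{T} & \mathbf{V}^T & \mathbf{W} \end{smallmatrix}\bigr)$ is the Hessian of $\phi$, augmented by the linear multiplier coupling, propagated backward along the flow of the linearized costate--state equations, and that flow is Hamiltonian, so a symmetric terminal condition is preserved and the off-diagonal blocks stay paired. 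I would make this precise by writing the linearized system for $(\delta\mathbf{x},\delta\bm{\lambda})$ in the block form $\tfrac{d}{dt}(\delta\mathbf{x},\delta\bm{\lambda}) = \mathcal{A}(t)\,(\delta\mathbf{x},\delta\bm{\lambda}) + \mathcal{B}(t)\,(d\bm{\nu},dp)$ with $\mathcal{A}$ of Hamiltonian type, and checking symmetry of the propagated matrix, which, since the flow preserves it, reduces to a check at $t=t_f$ against $\phi_{\mathbf{x}\mathbf{x}}$, $\phi_{\mathbf{x}p_f}$, $\phi_{p_f p_f}$.

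The step I expect to be the real obstacle is precisely this bookkeeping of transposes and signs: the paper's notation suppresses some transposes (as is common in second-variation derivations), so the coefficient matching for $\mathbf{Q}$, $\mathbf{V}$, $\mathbf{W}$ must be done with all dimensions fixed throughout and then cross-checked against the symmetry requirement above, and that is where a sign or transpose slip would most easily enter. A secondary point worth recording is the well-posedness of the subsequent multiplier and parameter updates, which require the block $\bigl(\begin{smallmatrix}\mathbf{Q} & \mathbf{V}\\ \mathbf{V}^T & \mathbf{W}\end{smallmatrix}\bigr)$ to be invertible; that is where Assumption~\ref{ass:solution_boundedness} together with a controllability-type nondegeneracy of the terminal constraints is used.
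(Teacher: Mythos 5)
Your differential route---positing the transformations as identities along the linearized flow, differentiating, and matching coefficients to recover the backward ODEs (\ref{eq:odeQ}), (\ref{eq:odeV}), (\ref{eq:odeW}) with the terminal data $\mathbf{Q}(t_f)=0$, $\mathbf{V}(t_f)=\bm{\phi_{\nu p}}$, $\mathbf{W}(t_f)=\bm{\phi}_{pp}$---is exactly the paper's stated strategy (``computing the ODEs and identifying the similarities with previous ODEs''), and your recovered equations agree with Eqs.~(\ref{eq:odeV})--(\ref{eq:odeW}) up to the paper's own loose transpose conventions. The paper offers only that one-line sketch, so your write-up, including the symmetry argument for the $\mathbf{V}$/$\mathbf{V}^T$ pairing, is a correct and substantially more complete version of the same proof.
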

This can be demonstrated by computing the ODEs and identifying the similarities with previous ODEs.

This yields additional ODEs:
\begin{align}
-\frac{d\mathbf{V}}{dt} &=  \mathbf{K}^T \left( \mathbf{f}_u \bm{\gamma} + \mathbf{f}_p \right)
\label{eq:odeV} \\
-\frac{d \mathbf{Q}}{dt} &= \mathbf{K}^T \mathbf{f}_u \bm{\omega}
\label{eq:odeQ}\\
-\frac{d \mathbf{W}}{dt} &= H_{pp} + \bm{\gamma} \mathbf{H_{up}} + \mathbf{T}^T \left( \mathbf{f}_u \bm{\gamma} + \mathbf{f}_p \right) + \mathbf{f}_p \mathbf{T}
\label{eq:odeW}
\end{align}

Eventually, assuming $\mathbf{W}(t_0) < 0$, the problem parameter update is,
\begin{equation}
	d p = -\mathbf{W}(t_0)^{-1} \bm{\phi_{\nu p}}  \label{eq:update_p}
\end{equation}

Also, assuming $\mathbf{Q}(t_0)^{-1}$ exists, lagrange multipliers are updated with the linear update
\begin{equation}
	d \bm{\nu} = \epsilon_{\nu} \mathbf{Q}(t_0)^{-1} \left( \delta \bm{\phi} - \mathbf{V}(t_0) d p \right)  \label{eq:update_nu}
\end{equation}
If $\mathbf{Q}(t_0)<0$, $d \bm{\nu}$ defines a direction for the constraints reduction, and $\epsilon_{\nu} \in [0, 1]$ is thus determined to minimise the constraints $\bm{\psi}$.\par 
\vspace{\baselineskip}

The ODE system (Eqs.~(\ref{eq:odeV} - \ref{eq:odeW})) is integrated backwards from some terminal conditions depending on the terminal constraints $\bm{\psi}$ and the terminal cost $J$, while the state dynamics (Eq.~(\ref{eq:dynamic})) are integrated forward. This results in numerically decoupled schemes, which yield to numerical robustness.

\subsection{Terminal Conditions}
The formulation allows the handling of various types of constraints without changing of the backward integrated ODE, since the treatment of constraints has an influence only on the terminal conditions.\par 

The following terminal conditions are used to integrate backwards the ODEs (Eqs. (\ref{eq:odeR}), (\ref{eq:odeK}), (\ref{eq:odeT}), (\ref{eq:odeV}), (\ref{eq:odeQ}), (\ref{eq:odeW})):
\begin{align}
\mathbf{R}(t_f) &= \mathbf{J_{xx}}(t_f) + \bm{\phi_{xx}}(t_f) \label{eq:terminal_conditions_R} \\
\mathbf{K}(t_f) &= \bm{\phi_{\nu x}} (t_f) \label{eq:terminal_conditions_K} \\
\mathbf{Q}(t_f) &= 0  \label{eq:terminal_conditions_Q} \\
\mathbf{T}(t_f) &= \bm{\phi_{xp}}(t_f) \label{eq:terminal_conditions_T} \\
\mathbf{V}(t_f) &= \bm{\phi_{\nu p}} (t_f) \label{eq:terminal_conditions_V} \\
\mathbf{W}(t_f) &= \bm{\phi}_{pp} (t_f) \label{eq:terminal_conditions_W}\\
\bm{\lambda}(t_f) &= \bm{\phi_{x}}(t_f)
\end{align}

The satisfaction of the constraints $\bm{\psi}$ depends on the Lagrange multiplier $\bm{\nu}$. 

\begin{lemma}\label{lemma:constraints_reduction}
Assuming $\mathbf{H}_{uu}^{-1}$ is positive definite, the Jacobian $\bm{\psi}_{\mathbf{x}}(\mathbf{x}(t_f))$ of the constraints with respect to the terminal state $\mathbf{x}_f$ has full rank $n_{\psi}$, and $\mathbf{Q}(t_0)$ must be negative definite, then the Lagrange multipliers update $d\bm{\nu}$ gives the (local) direction of minimisation of the constraints $\bm{\psi}$.
\end{lemma}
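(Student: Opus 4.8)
The plan is to exhibit a merit function for constraint satisfaction, namely $M(\mathbf{x}(t_f)) = \tfrac12\,\bm{\psi}(\mathbf{x}(t_f))^T\bm{\psi}(\mathbf{x}(t_f))$, and to show that its first-order variation along the update $d\bm{\nu}$ of Eq.~(\ref{eq:update_nu}) --- propagated through the feedback term $\bm{\omega}\,d\bm{\nu}$ of Eq.~(\ref{eq:feedback_control}), hence through the trajectory to $\mathbf{x}(t_f)$ --- is strictly negative whenever $\bm{\psi}\neq 0$. I would anchor the computation at the initial time: since the initial state is pinned by Eq.~(\ref{eq:initial_conditions}) one has $\delta\mathbf{x}(t_0)=0$, so the transformation Eq.~(\ref{eq:trf_psi}) evaluated at $t_0$ reduces to the predicted residual change $d\bm{\psi} = \mathbf{Q}(t_0)\,d\bm{\nu} + \mathbf{V}(t_0)\,dp$.

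First I would prove that $\mathbf{Q}(t_0)$ is negative definite, which both legitimises the inverse in Eq.~(\ref{eq:update_nu}) and fixes the relevant sign. Substituting the gain $\bm{\omega} = -\mathbf{H_{uu}}^{-1}(\mathbf{K}^T\mathbf{f}_u)^T$ into the ODE Eq.~(\ref{eq:odeQ}) gives $\dot{\mathbf{Q}} = \mathbf{K}^T\mathbf{f}_u\,\mathbf{H_{uu}}^{-1}\mathbf{f}_u^T\mathbf{K}$, which is positive semidefinite because $\mathbf{H_{uu}}^{-1}\succ 0$; integrating from the terminal value $\mathbf{Q}(t_f)=0$ (Eq.~(\ref{eq:terminal_conditions_Q})) yields
\begin{equation*}
\mathbf{Q}(t_0) = -\int_{t_0}^{t_f}\mathbf{K}^T\mathbf{f}_u\,\mathbf{H_{uu}}^{-1}\mathbf{f}_u^T\mathbf{K}\,dt \;\preceq\; 0 .
\end{equation*}
To upgrade this to strict definiteness I would use the terminal condition $\mathbf{K}(t_f)=\bm{\phi_{\nu x}}(t_f)=\bm{\psi}_{\mathbf{x}}(\mathbf{x}(t_f))$ (from Eq.~(\ref{eq:terminal_conditions_K}), with $\phi_{\bm{\nu}}=\bm{\psi}$): since this Jacobian has full row rank $n_{\psi}$ by hypothesis, continuity of the solution of Eq.~(\ref{eq:odeK}) keeps $\mathbf{K}(t)$ of full row rank on a left neighbourhood of $t_f$, and there the integrand is positive definite, so the integral --- hence $-\mathbf{Q}(t_0)$ --- is positive definite. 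This also shows that the three listed hypotheses are not independent: the third is a consequence of the first two.

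Next I would propagate the update. Inserting $d\bm{\nu} = \epsilon_{\nu}\mathbf{Q}(t_0)^{-1}\!\left(\delta\bm{\phi}-\mathbf{V}(t_0)\,dp\right)$ into $d\bm{\psi} = \mathbf{Q}(t_0)\,d\bm{\nu}+\mathbf{V}(t_0)\,dp$ telescopes to $d\bm{\psi} = \epsilon_{\nu}\,\delta\bm{\phi} + (1-\epsilon_{\nu})\mathbf{V}(t_0)\,dp$, where $\delta\bm{\phi}$ is the targeted first-order decrement of the residual, equal to $-\bm{\psi}(\mathbf{x}(t_f))$ for the full Newton step, and $dp$ has already been fixed by Eq.~(\ref{eq:update_p}); the leading contribution along the ray $\epsilon_{\nu}\in[0,1]$ is therefore $-\epsilon_{\nu}\bm{\psi}$. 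Consequently the directional derivative of the merit function is
\begin{equation*}
\frac{dM}{d\epsilon_{\nu}}\bigg|_{\epsilon_{\nu}=0^{+}} = \bm{\psi}^T\,\frac{d\bm{\psi}}{d\epsilon_{\nu}}\bigg|_{0} = -\norm{\bm{\psi}}^2 < 0 \qquad (\bm{\psi}\neq 0),
\end{equation*}
so $d\bm{\nu}$ is a descent direction for the constraint violation; a Taylor expansion of $\bm{\psi}(\mathbf{x}_{\epsilon_{\nu}}(t_f))$ about $\epsilon_{\nu}=0$ then shows the true violation decreases for all sufficiently small $\epsilon_{\nu}>0$, which is exactly the ``local'' qualifier and justifies the line search on $\epsilon_{\nu}$ announced after Eq.~(\ref{eq:update_nu}).

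I expect the main obstacle to be the rigorous passage from the merely semidefinite bound $\mathbf{Q}(t_0)\preceq 0$ to strict negative definiteness: this is a controllability-type statement that combines the full-rank terminal condition on $\mathbf{K}$, continuity and boundedness of the ODE solutions (Assumption~\ref{ass:solution_boundedness}), and a rank condition on $\mathbf{f}_u$ along the nominal trajectory, rather than a purely algebraic identity --- one has to rule out the degenerate case in which $\mathbf{f}_u^T\mathbf{K}\,d\bm{\nu}$ vanishes identically on $[t_0,t_f]$ for some $d\bm{\nu}\neq 0$. A secondary technical point is the bookkeeping of the simultaneous $\mathbf{u}$- and $p$-updates: one must check that their cross contributions to the propagated $d\bm{\psi}$ either cancel (as the $dp$ terms do by construction of Eq.~(\ref{eq:update_nu})) or are of higher order, so that the dominant term is genuinely $-\epsilon_{\nu}\bm{\psi}$, and then control the neglected remainder through the estimate Eq.~(\ref{eq:DeltaLagrangian}).
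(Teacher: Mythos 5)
The paper does not actually supply a proof here --- it defers entirely to the author's thesis --- so there is no in-text argument to compare yours against. Taken on its own merits, your reconstruction follows the natural route: establish the sign of $\mathbf{Q}(t_0)$ from the Riccati-type ODE Eq.~(\ref{eq:odeQ}) and the terminal condition Eq.~(\ref{eq:terminal_conditions_Q}), then verify that the propagated residual change $d\bm{\psi}=\mathbf{Q}(t_0)\,d\bm{\nu}+\mathbf{V}(t_0)\,dp$ is a descent direction for $\tfrac12\norm{\bm{\psi}}^2$. The semidefinite bound $\mathbf{Q}(t_0)=-\int_{t_0}^{t_f}\mathbf{K}^T\mathbf{f}_u\mathbf{H_{uu}}^{-1}\mathbf{f}_u^T\mathbf{K}\,dt\preceq 0$ is correct and is the right backbone for the argument.

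Two steps do not hold as written. First, your claim that strict negative definiteness of $\mathbf{Q}(t_0)$ is a \emph{consequence} of the other two hypotheses is false in general: full row rank of $\mathbf{K}(t_f)=\bm{\psi}_{\mathbf{x}}(t_f)$ does not make the instantaneous integrand $\mathbf{K}^T\mathbf{f}_u\mathbf{H_{uu}}^{-1}\mathbf{f}_u^T\mathbf{K}$ positive definite near $t_f$, because its rank is also capped by $\operatorname{rank}\mathbf{f}_u\leq n_u$, which may be smaller than $n_{\psi}$ (it is in the orbital example, where $n_\psi=3$). Definiteness of the integral is a Gramian/output-controllability statement that must accumulate rank over time; you concede this in your closing paragraph, but the body of the proof asserts the pointwise definiteness and draws the implication anyway. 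This is survivable only because the lemma lists $\mathbf{Q}(t_0)\prec 0$ as a hypothesis rather than a conclusion --- you should simply assume it, not derive it. Second, the descent inequality is computed incorrectly when $dp\neq 0$: from your own telescoped expression $d\bm{\psi}=\epsilon_{\nu}\,\delta\bm{\phi}+(1-\epsilon_{\nu})\mathbf{V}(t_0)\,dp$ with $dp$ fixed, the derivative at $\epsilon_{\nu}=0^{+}$ is $-\bm{\psi}-\mathbf{V}(t_0)\,dp$, so $\frac{dM}{d\epsilon_{\nu}}\big|_{0^{+}}=-\norm{\bm{\psi}}^2-\bm{\psi}^T\mathbf{V}(t_0)\,dp$, which is not sign-definite; the $dp$ terms cancel only at the full step $\epsilon_{\nu}=1$, not along the ray. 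The clean statement of the lemma is obtained by isolating the $d\bm{\nu}$ direction (set $dp=0$, $\delta\mathbf{x}_0=0$), in which case $d\bm{\psi}=-\epsilon_{\nu}\bm{\psi}$ exactly and the conclusion follows; with a simultaneous $dp$ update one must either bound the cross term or fold $dp$ into the same step-length parameter. You should also note that the identification $\delta\bm{\phi}=-\bm{\psi}$ is an interpretive choice --- the paper never defines $\delta\bm{\phi}$ --- and the whole argument rests on Proposition~\ref{prop:transformation}, which the paper itself only sketches.
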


\begin{proof}
See \citep{Olympio.Phd}. 
\end{proof}

\subsection{Parameters Determination}
The unknown parameters for the initial conditions and the dynamics do not follow the same update rule as of $dp$. Because the equations of the sensitivity matrices, ($\mathbf{R}$,  $\mathbf{K}$, and  $\mathbf{Q}$) and ($\mathbf{T}$,  $\mathbf{V}$, and  $\mathbf{W}$), are integrated backwards, setting the initial constraint $\bm{\varphi}$ does not change the terminal conditions of integration Eqs.~(\ref{eq:terminal_conditions_R}, \ref{eq:terminal_conditions_K}, \ref{eq:terminal_conditions_Q}, \ref{eq:terminal_conditions_T}, \ref{eq:terminal_conditions_V}, and \ref{eq:terminal_conditions_W}). 
Assuming the initial constraint $\bm{\varphi}$ is well defined (maximum rank condition), the update formula for the parameters $p$ is 
\begin{equation}
	d p_0 = -\bm{\varphi}_{pp}^{-1} \bm{\varphi}_p \label{eq:update_p0}
\end{equation}

There is no need to update the Lagrange vector $\bm{\eta}$ introduced in Eq.~(\ref{eq:scddevel}). One may want to include a relaxation factor to control the update $\delta p_0$.

Likewise, for the parameter part of the terminal constraint, 
\begin{equation}
	d p_f = -\bm{\psi}_{pp}^{-1} \bm{\psi}_p \label{eq:update_pf}
\end{equation}

\section{Treatment of Control Constraints} \label{sec:Control}
 
\subsection{Acceptance of Control Update}\label{sec:ControlUpdate}
Once the backward equations are integrated from their respective terminal conditions, and around a nominal trajectory, the control update is computed using Eqs.~(\ref{eq:feedback_control}) and (\ref{eq:control_update}). Note that according to Eq.~(\ref{eq:update_nu}), $\delta \mathbf{u}$ depends on $\epsilon_{\nu}$.

Both $\epsilon_{\nu}$ and $\epsilon_u$ permit of adjusting the control update to yield an improvement of the extended value function, and to ensure that the second order developments truncation error (Eq.~(\ref{eq:DeltaLagrangian})) is small enough. If Lagrange multipliers $\bm{\nu}$ are not used, a linesearch procedure is used to find $\epsilon_u$ that minimizes the merit function. When $\epsilon_{\nu}$ is used, the direction of minimization is given by a linear combination of $\bm{\alpha}$ and $d\bm{\nu}$. One approach is to perform linesearches with $\epsilon_u$ and $\epsilon_{\nu}$ successively, the purpose being to find quickly an improvement of the merit function, regardless whether it is or not an strict minimizer.

Indeed, as in \citep{mayne}, the validity of the second order development can be checked using
\begin{align}
\Delta L &= \int_{t_0}^{t_f}{ [ H(\bar{\mathbf{x}}, \bar{\bm{\lambda}}, \bar{\mathbf{u}}, p, t) - H(\bar{\mathbf{x}}, \bar{\bm{\lambda}}, \mathbf{u}, p, t) ] dt}
\label{eq:check_scndorder} \\
\Delta L &\approx \left(\Delta \mathcal{L}\right)_{d \bm{\nu} = 0}
\end{align}

Clearly,
\begin{equation}
 \lim_{\epsilon_u \rightarrow 0}{\Delta L} =  \lim_{\epsilon_u \rightarrow 0}{\left(\Delta \mathcal{L}\right)_{d \bm{\nu} = 0}} = 0
\end{equation}

For $\epsilon_u$ small enough, $\Delta L$ gets close to zero thus ensuring the second order developments are assumed valid. If in addition, the value $\Delta L$ of Eq.~(\ref{eq:check_scndorder}) is negative then the control update $d\mathbf{u}$ can be accepted.

Under these conditions the method generates iterates of control $\mathbf{u}^i(t)$ that minimize the Lagrangian $\mathcal{L}$. 

\subsection{Bang-Bang Control and Control Constraints}
According to Pontryaguin Maximum Principle\citep{Pontryaguin}, when the control belongs to a compact set $U$, for the problem class considered where the control appears linearly in the Hamiltonian, the optimal control has a bang-bang structure. Obviously, this result does not come from the variational equations (that seek $\nabla_{\mathbf{u}} H = 0$), but directly from the maximisation of the Hamiltonian over the compact set. In the problem formulation, and Eq.~(\ref{eq:control_box_constraint}), closed set on the control is assumed. However, the development of the method, referred to calculus of variations, assume the Lagrangian is differentiated on an open set.
\par 
\vspace{\baselineskip}

Constraints on the control can be set such that the control is implicitly limited. One approach is to formulate the control constraint (Eq.~	\ref{eq:control_box_constraint}) using a barrier function $\rho$ that is incorporated into the cost \citep{whiffen.2001}\citep{lantoine}. Thus, when the control violates bounds at any time, the solution gets penalised and eventually the solver corrects the violation. This approach seems to provide practical results, and also as the control bounds may not be respected in the initial iterations of the solver, the solver can go through a forbidden space of the search space and possibly  find solutions with that extra controllability. This approach, however, does not ensure the optimal control solution is an admissible control (e.g. $\mathbf{u} \notin U$).\par 

The approach considered here is to explicitly force the control to not exceed the desired limits. The principle is to identify a continuous function $m:\mathbb{R} \rightarrow U$ and to introduce a slack variable $v_i$ such that the box constraint 	Eq.~\ref{eq:control_box_constraint} can be replaced with the explicit equality constraint 
\begin{equation}
	u_i - m(v_i) = 0
\end{equation}

Indeed, one can thus replace $u_i$ accordingly in the developments, and solve the problem for $v_i$ directly instead of $u_i$. 
Since the compact set $U$ is then not used, the gradient $\nabla_v H$ can be used for the computation of the control updates, and this also provides an indication of the optimality of the control $\mathbf{u}$ (Weierstrass condition). Such an application $m$ can be for instance,
\begin{equation*}
	m(v_i) = u_i^l + (u_i^u-u_i^l) \sin^2{v_i} 
\end{equation*}

This approach, although numerically more challenging than penalization, makes sure that the control satisfies the problem description and never violates the box constraints Eq.~(\ref{eq:control_box_constraint}).

\subsection{Heuristic for Convergence of Bang-Bang Problems}
Some transformations $m:\mathbb{R} \rightarrow U$ may allow, however, triviality or singularity to the problem. For instance, when Lagrange multipliers $\bm{\nu}$ are not used, for the i-th component of the control $\mathbf{u}$, some transformations $m$ yield,
\begin{equation}
	\exists t \in [t_0, t_f], m(v_i(t)) = 0 \Rightarrow ( H_{v_i}(t) = 0, \delta v_i(t) = 0)
\end{equation}
No updates are made even though the control solution is not optimal.\par 
A diagonal matrix $\mathbf{D} > 0$ is thus introduced in the Hamiltonian,
\begin{equation}
	H(\mathbf{x}, \bm{\lambda}, \mathbf{u}, p; t) = \bm{\lambda}^T \mathbf{f}(\mathbf{x}, \mathbf{u}, p; t) + \mathbf{u}^T \mathbf{D} \mathbf{u}
\end{equation}
This is equivalent to adding a quadratic Lagrange term in the cost $J = J(\mathbf{x}(t_f), \mathbf{u}, p_f)$. As a result, control $\mathbf{u}$ is smoothed. To obtain a bang-bang saturated control, $\mathbf{D}$ is updated and decreased periodically with the iterations for the control to be in a neighbourhood of an optimal bang-bang control solution.


\section{Continuous Solution Approximation} \label{sec:mesh}

\subsection{Problematic} 
To compute the feedback control coefficients, $\bm{\alpha}$, $\bm{\beta}$, $\bm{\gamma}$ and $\bm{\omega}$, a backwards integration (e.g. Eqs.~(\ref{eq:odeR}), (\ref{eq:odeK}), \ref{eq:odeT}) is done around a nominal trajectory $\{\mathbf{x}, p\}$ that results from a forward integration. 
It is thus not possible to solve all the ODEs concurrently. It is necessary to store continuous solution of the ODEs, that is both the trajectory (e.g. $\mathbf{x}$ and $\bm{\lambda}$), and the sensitivity matrices (e.g. $\mathbf{R}$, $\mathbf{K}$) to then compute the control update. To respect the continuous developments of previous section, an efficient and reliable continuous data storage mechanism has to be devised.\par 

\subsection{Current Discrete Approximation Approaches} 
Often, the alternative is to work with the best discrete time approximate solutions and the definition of a time mesh where the solution can be evaluated.

Many approaches have been proposed in the literature to produce a discrete control time mesh. The approach of keeping the control constant for fixed durations \citep{lantoine}\citep{whiffen.2001} produces good results when the number of nodes has been well selected. Likewise, the control can be kept constant over multiple Runge-Kutta time steps for smooth control\citep{Colombo}. But, since the control is used in the forward integration but not during the backwards integration, using the Runge-Kutta (RK) time steps alone does not seem appropriate for the bang-bang problem considered. Runge-Kutta integration is only time-reversal when used with a constant step size \citep{Stoffer}. 
Others\citep{varin}\citep{Betts.98} propose a refinement procedure based on an integer programming problem, and consider the minimisation of the continuity error and the minimisation of number of the points to add. \par 
When the control is of a bang-bang structure, an other approach should be followed, because a constant time mesh for the control is likely to miss the switching times of the optimal control. The usual approaches increase the number of points close to the discontinuity. This has a major drawback of increasing the number of data points to store. 

The choice of a discrete, or multi-stage, formulation could be argued in regards to potential applications, since it is always simpler to implement a piecewise control. From a theoretical point of view, however, it is necessary to propose a scheme for limiting the approximation made on the control. Properties of the continuous control solution (e.g. switching structure, singular arcs) can eventually be used to produce a good discrete control solution.  

\subsection{Continuous Trajectory Approximation} \label{sec:ContinuousApprox}
The trajectory and the control are initially sampled at $N$ given node points that can be described by a grid $t_i \in [t_0, t_f]$, $i=1..N$. The trajectory and the control are thus exactly defined at the nodes, and an approximation to the continuous solution is constructed between the node points.
Consider an explicit adaptive step-size Runge-Kutta integration method. It is the baseline for solving the ODE systems in the current paper. An explicit Runge-Kutta integration method uses the following iterative equations for integrating, 
\begin{align}
	\mathbf{y}_{n+1} &= \mathbf{y}_n + h\sum_{i=1}^s b_i \mathbf{k}_i \label{eq:RK}\\
	\mathbf{k}_i &= \mathbf{f}\left(t_n + c_i h, \mathbf{y}_n + h \sum_{j = 1}^s a_{ij} \mathbf{k}_j\right),
\end{align}
where $a_{ij}$, $b_i$ and $c_i$ are constant scalars, and $s$ is the order of the method. 
Shampine's method for dense output computation in Runge-Kutta computation is used\citep{shampine}. One can introduce a polynomial $\mathbf{S}(x)$, and an error function $\mathbf{e}(x)$ such that,
\begin{align}
\mathbf{y}(x) &= \mathbf{S}(x) + \mathbf{e}(x) \\
\frac{d \mathbf{e}}{dx} &= \mathbf{f}(x, \mathbf{S}(x)) - \frac{\mathbf{S}(x)}{dx} \qquad \mathbf{e}(x_0) = 0 \nonumber
\end{align}

Shampine\citep{shampine} thus defines polynomials $\mathbf{P}(x)$ of maximum degree $m+1$ ($m$ being the number of sub-integrations in one subinterval of length $h$), 
\begin{equation}
\frac{d\mathbf{P}(x)}{dx} = \sum_{i=0}^{m}{l_{i,m}(x) \mathbf{f}(x_i, \mathbf{e}_i+\mathbf{S}(x_i))}
\end{equation}
where $l_{i,m}$ are Lagrange basis polynomials of kth-order $m$. It is demonstrated there exists an integer $n$, and constants $c_1$ and $c_{2,m}$ such that
\begin{align}
\norm{\frac{d^2 \mathbf{y}(x)}{d^2x} - \frac{ d\mathbf{S} (x)}{d^2x}} &\leq c_{1} h^n\\
\norm{\mathbf{y}(x) - \mathbf{P}(x)} &\leq c_{2,m} h^{2+\min{(m, n+1)}}
\end{align}
That is, $\mathbf{P}$ is a better higher order approximation of $\mathbf{y}$ that $\mathbf{S}$ is.
An iterative scheme is used to construct $\mathbf{P}(x)$. The procedure starts with a simple Euler integration model for $\mathbf{S}(x)$ with $m=1$, and then increases the order $m$ by evaluating successively $\mathbf{S}(x)$ and $\mathbf{P}(x)$. The construction of $\mathbf{S}$ and $\mathbf{P}$ is dependant of the extrapolation method (e.g. Runge-Kutta method), and $\mathbf{P}$ is only then available at the end of the integration over the interval of length $h$.\par 
For a fifth order explicit RK method, $\forall x \in [x_n, x_{n+1}]$, 
\begin{equation}
	\mathbf{y}(\bar{x}) = \mathbf{r}_1 + \bar{x} (\mathbf{r}_2 + (1-\bar{x})(\mathbf{r}_3 + \bar{x} (\mathbf{r}_4 + (1-\bar{x})\mathbf{r}_5))),
	\label{eq:continuous_approx}
\end{equation}
where
\begin{align}
\mathbf{r}_1 &= \mathbf{y}_n\\
\mathbf{r}_2 &= h\sum_{i=1}^s b_i \mathbf{k}_i\\ 
\mathbf{r}_3 &= h\mathbf{k}_1 - (\mathbf{y}_{n+1} - \mathbf{y}_n)\\
\mathbf{r}_4 &= -h (\mathbf{k}_7 + \mathbf{k}_1) \\
\mathbf{r}_5 &= h  (d_1\mathbf{k}_1+d_3\mathbf{k}_3+d_4\mathbf{k}_4+d_5\mathbf{k}_5 +d_6\mathbf{k}_6+d_7\mathbf{k}_7)
\end{align}
and $d_i$ are constants of the method. Globally, the solution for the state trajectory and the sensitivity matrices can be stored as a $\mathcal{C}^1$ approximation. For a fifth order RK method, five coefficients for each element of the ODE system, plus the range of validity of the polynomials are stored. \par 
Consequently, during the forward integration, the trajectory state $\mathbf{x}(t)$ is stored continuously, using Eq.~(\ref{eq:continuous_approx}), and this continuous approximation is used during the backward integration. Likewise, during the backward integration, the costate $\bm{\lambda}(t)$ and the sensitivity matrices $\mathbf{R}(t), \mathbf{K}(t), \mathbf{T}(t), \mathbf{V}(t)$ continuous approximation are stored. Thus, all matrices and vectors involved in the computations follow a high resolution continuous approximation. Eventually, the backward stage being fully continuous, a continuous control update $\delta \mathbf{u}$ is available.\par 
Additionally, to obtain the best trajectory and the most accurate optimal control, from the continuous control update $\delta \mathbf{u}$, an approach should be devised to construct a continuous approximation of the nominal control $\bar{\mathbf{u}}$.

\subsection{Continuous Nominal Control Approximation}
The continuous control approximation is constructed a-posteriori, from the results of the backward integration. In regards to Eq.~(\ref{eq:control_update}), the continuous control update should be added to the nominal control $\bar{\mathbf{u}}$ to provide an updated control $\mathbf{u}$ that can be used as the new nominal control for the following iteration during the backward integration. 
Consider a nominal control $\bar{\mathbf{u}}$, during the forward recursion, when integrating ODE Eq.~(\ref{eq:dynamic}). According to section~\ref{sec:ContinuousApprox}, control update $\delta \mathbf{u}$ can be constructed accurately using RK interpolant of matrices $\mathbf{R}$, $\mathbf{K}$ for all $t \in [t_0, t_f]$, and likewise, an accurate updated control $\mathbf{u}$ can be computed. This online produced nominal control has however to be stored, such that it can be used again for the backward recursion and future iterations. Practically, this is a difficult problem since neither $\bar{\mathbf{u}}$ nor $\delta \mathbf{u}$ have a readily available closed form, in general.\par  
Consider $t_k$ the RK nodes of the forward integration with the control $\bar{\mathbf{u}} + \epsilon_u\delta \mathbf{u}$ with values $\mathbf{u}(t_k)$.
The problem is to completely determined $\mathbf{u}$ such that
\begin{align}
|J(\bar{\mathbf{u}}+\epsilon_u\delta\mathbf{u}) - J(\mathbf{u})| &\leq \eta_1 \\
\norm{\phi(\bar{\mathbf{u}}+\epsilon_u\delta\mathbf{u}) - \phi(\mathbf{u})} &\leq \eta_2
\end{align}
where $J$ and $\phi$ are, respectively, the target merit function value and the constraints, computed with the improved control $\bar{\mathbf{u}} + \delta \mathbf{u}$. $\eta_1$ and $\eta_2$ are given small tolerances.
For very sensitive problems, the discrepancy between this new control $\mathbf{u}$ and the control $\bar{\mathbf{u}}+\epsilon_u\delta \mathbf{u}$ can result in large difference in the merit function value. This difference can possibly be bigger than the expected improvement (Eq.~(\ref{eq:check_scndorder}) ) thus resulting in a failure of the method to converge properly.\par

Each component of $\mathbf{u}$ is thus approximated between the coarse grid nodes to follow a piecewise-polynomial function, or spline (linear function, cubic spline interpolant). Intermediate nodes at $t_{k+1/2}$ can be tested, and added if it results in a better approximation of $\bar{\mathbf{u}} + \epsilon_u \delta \mathbf{u}$. Eventually, $\mathbf{u} \rightarrow \bar{\mathbf{u}}$.

\section{Second Order Algorithm and Implementation} \label{sec:algo}

\subsection{The Algorithm}

The algorithm is implemented (C-SOGO) and coded in C++. It uses efficient linear algebra classes with shared-memory parallelisation. 
A high level description of the algorithm is the following: 


Given:
\begin{itemize}
	\item convergence tolerance on the constraints $\eta_{\psi}$, and the optimality condition $\eta_{H}$.
	\item Trust-Region radius for the control $\Delta_u > 0$.
	\item Trust-Region radius for Lagrange multipliers $\Delta_{\nu} > 0$, and parameter  $\Delta_{p} > 0$.
	\item a nominal control $\bar{\mathbf{u}}(t)$ for $t \in [t_0, t_f]$.
	\item a nominal Lagrange multiplier $\bar{\bm{\nu}} \in \mathbb{R}^{n_{\psi}}$.
	\item a regularisation matrix $\mathbf{C} \in \mathbb{R}^{n_{\psi}, n_{\psi}}$.
	\item initial state $\mathbf{x}_0 = \mathbf{x}(t_0)$.
\end{itemize}

\begin{description}
\item[Step 0]  \textbf{Initialisation}
\begin{description}
	\item[Step 0.a.] Initialise iteration counter.
	\item[Step 0.b.] Set $\epsilon^*_u=0$ and $\epsilon^*_{\nu}=0$.
\end{description}
\par
\vspace{0.7\baselineskip}

\item[Step 1]  \textbf{Nominal trajectory}
\begin{description}
	\item[Step 1.a.] Using a nominal control $\bar{\mathbf{u}}(t)$, compute the state trajectory $\mathbf{x}(t)$. 
	\item[Step 1.b.] Compute nominal merit function value $\bar{L}$.
\end{description}
\par
\vspace{0.7\baselineskip}

\item[Step 2.] \textbf{Check terminal criteria}
\begin{description}
	\item Compute the terminal constraints $\bm{\psi}$ Eq.~(\ref{eq:terminal_constraint}). If constraints $\norm{\psi} > \eta_{\psi}$ go to Step 3.
	\item If optimality condition $\max{\left(\Delta L, \norm{H_u}_{\infty}\right)} > \eta_{H}$ go to Step 3.
	\item Otherwise, an optimal solution has been found. Stop.
\end{description}
\par
\vspace{0.7\baselineskip}

\item[Step 3]  \textbf{Backwards Step}
\begin{description}
	\item[Step 3.a.]  Computation of the terminal conditions with Eqs.~\ref{eq:terminal_conditions_K}, ~\ref{eq:terminal_conditions_Q}, ~\ref{eq:terminal_conditions_R} and Eqs.~\ref{eq:terminal_conditions_T}, ~\ref{eq:terminal_conditions_W}, ~\ref{eq:terminal_conditions_K}. 
	\item[Step 3.b.] Computation of the Shift matrix $\mathbf{S_{uu}}$ thanks to the Trust-Region algorithm (see definition~\ref{def:trustregion}) for $\mathbf{H_{uu}}(t) + \mathbf{S_{uu}}(t) > 0$ for all $t \in [t_0, t_f]$. The Shift matrix $\mathbf{S_{uu}}$ shall ensure the boundedness of the ODE solutions.
	\item[Step 3.c.] Integration of Eqs.~(\ref{eq:odeR}, \ref{eq:odeK}, and \ref{eq:odeQ}) and computation of continuous time approximation of $\bm{\lambda}$(t), $\bm{R}(t)$, $\bm{K}(t)$ and $\bm{Q}(t)$ for all $t$. During the integration, $\mathbf{H}_{uu}$ is shifted with $\mathbf{S}_{uu}$. As the equations are integrated, in accordance to sec.~\ref{sec:ContinuousApprox}, a mesh and polynomials are constructed to provide a continuous approximation of them.
	\item[Step 3.d.] Likewise, if the problem includes a parameter, integrate Eqs.~(\ref{eq:odeT}), (\ref{eq:odeV}), and (\ref{eq:odeW}) and compute continuous time approximations of $\bm{T}(t)$, $\bm{V}(t)$ and $\bm{W}(t)$ for all $t$.
\end{description}
\par
\vspace{0.7\baselineskip}

\item[Step 4]  \textbf{Computation of Lagrange multipliers $\bm{\nu}$}
\begin{description}
	\item[Step 4.a.] If $\mathbf{Q}(t_0) < 0$ go to 4.c
	\item[Step 4.b.] Apply Trust-Region algorithm to have a negative definite $\mathbf{Q}(t_0)$.
	\item[Step 4.c.] Computation of Lagrange multipliers $\bm{\nu}$ for the constraints using Eq.~(\ref{eq:update_nu}).
\end{description}
\par
\vspace{0.7\baselineskip}

\item[Step 5.]  \textbf{Forward Integration Loop}
\begin{description}
	\item[Step 5.a.] Integrate the dynamical state equations Eq.~(\ref{eq:dynamic}) with the control $\mathbf{u} + \epsilon_u \bm{\delta} \mathbf{u}$, where the control update $\bm{\delta} \mathbf{u}$ is computed using Eq.~(\ref{eq:control_update}). The continuous approximation of sensitivity matrices $\mathbf{R}(t)$, $\mathbf{K}(t)$, and $\mathbf{Q}(t)$ accurately provide the feedback coefficients $\alpha$, $\beta$ and $\gamma$. To produce the improving control update $\bm{\delta} \mathbf{u}$ algorithm A1 is used to find $\epsilon_u$ and $\epsilon_{\nu}$ that minimise the merit function Eq.~(\ref{eq:value_function}).
	\item[Step 5.b.] Evaluation of the constraints $\bm{\psi}(\mathbf{x}; t_f)$, the objective function $J(\mathbf{x}; t_f)$, and the merit function value $L(\mathbf{x}, \bm{\nu}; t_f)$.
	\item[Step 5.c.] If improvement in merit function, $L<\bar{L}$, take $\epsilon^*_u=\epsilon_u$ and $\epsilon^*_{\nu}=\epsilon_{\nu}$. Go to step 6.
	\item[Step 5.d.] Improvement could not be found. Conditions of validity of the second order development might not be meet. Reduce Trust-Region radius $\Delta_u$. Increase Iteration counter. Go to step 3.
\end{description}
\par
\vspace{0.7\baselineskip}

\item[Step 6.]  \textbf{Accept current control.} 
\begin{description}
	\item[Step 6.a.] Constructing a continuous time approximation of the new nominal control $\bar{\mathbf{u}} \leftarrow \bar{\mathbf{u}} + \epsilon^*_u \bm{\delta} \mathbf{u}$. 
	\item[Step 6.b.] Update Lagrange multipliers, $\bm{\nu} = \bar{\bm{\nu}} + \epsilon^*_{\nu} d\bm{\nu}$.
	\item[Step 6.c.] Update nominal merit function value $\bar{L}$.
	\item[Step 6.d.] Update regularisation matrix $\mathbf{C}$.
\end{description}
\par
\vspace{0.7\baselineskip}

\item[Step 7.]  \textbf{Continue with the next iteration.} Increase iteration counter. Go to step 2.

\end{description}

\par
\vspace{0.7\baselineskip}

Algorithm A1 is either a line-search method to get $\epsilon_u \in [0,1]$ that minimises $\mathcal{L}$ when $\bm{\nu}$ is not used, or otherwise, a simple two-dimensional $[\epsilon_u, \epsilon_{\nu}]$-grid search  method to get the additional parameter $\epsilon_{\nu} \in [0,1]$. On the current iteration, linesearch, or gridsearch, is stopped when a sufficient decrease is obtained.

Convergence is achieved when both the constraints $\bm{\psi}$ and the optimality norm are satisfied to a given tolerance. A norm of optimality can be given by Eq.~(\ref{eq:check_scndorder}) and $\mathbf{H_u}$.

\section{Examples}\label{sec:examples}
The initial motivation of this work has been the optimization of low-thrust space trajectories, which present a bang-bang optimal control structure.
Both presented examples have affine control dynamics with the control defined on a closed set, and thus the optimal control should be bang-bang.

\subsection{Double Integrator Problem}\label{sec:example1}

The dynamics
\begin{equation}
		\mathbf{f}(\mathbf{x}, \mathbf{u}; t) = \frac{d }{dt}
		\begin{bmatrix}
			p\\
			v
		\end{bmatrix}
		=
			\begin{bmatrix}
				 v\\
				 u
			\end{bmatrix}
	\label{eq:ex_dynamic}
\end{equation}
where $p$ is the 1-D position of the particle, and $v$ is its velocity.
The control bounds are
\begin{equation}
	-1 \leq u \leq 1
\end{equation}
Owing to the control constraints, the transformation $u = 1-2\sin^2 v$ is used, and $v$ is the new control to seek.

Terminal constraints are
\begin{equation}
 \bm{\phi}(\mathbf{x}; t_f) = 
		\begin{bmatrix}
			p(t_f) = 0\\
			v(t_f) = 0
		\end{bmatrix}
\end{equation}
And the initial conditions are $p(0)=1$, $v(0)=1$. 

The objective is to minimize the total time,.
\begin{equation}
	J(\mathbf{x}; t_f) = t_f    \rightarrow \max
\end{equation}
The time of flight $t_f$ is thus considered as a parameter of the problem. The integration of the equations is thus done through a change of variable $\tau = t/t_f$ such that the integration is done on $[0, 1]$ with varying jacobian $d \tau$.
That is we should use
\begin{equation}
	\mathbf{\tilde{f}}(\mathbf{x}, \mathbf{u}, t_f; \tau) = \mathbf{f}(\mathbf{x}, \mathbf{u}; \tau t_f) t_f
\end{equation}

\begin{figure}[ht]
\centering
\includegraphics[width=1.\textwidth]{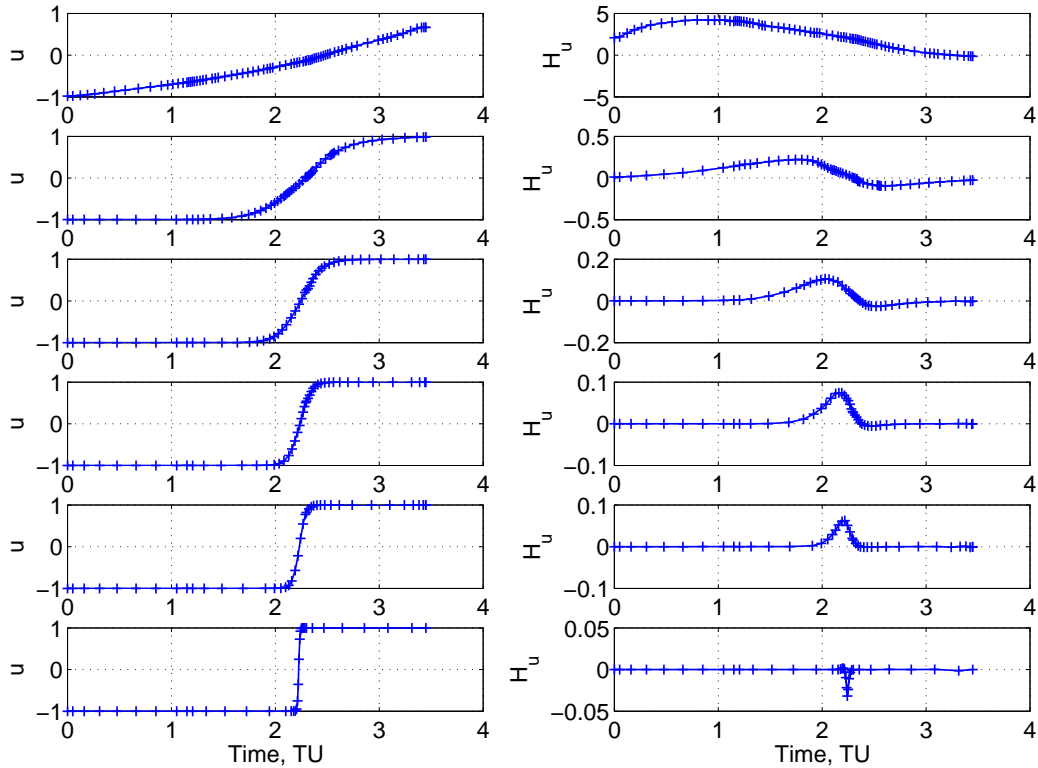}
\caption{Iterates of a simple control problem. The coarse grid includes 3 node points. Left, continuous control iterates. Right, gradient of the Hamiltonian with respect to the continuous control.}
\label{fig:adaptive_mesh}
\end{figure}

Figure~\ref{fig:adaptive_mesh} shows the iterates of program to find the solution. The control is approximated with a constrained cubic spline to prevent overshoot at intermediate points (with constrained cubic splines, the slope is imposed rather than the curvature). At the plateau of the control, in figure~\ref{fig:adaptive_mesh}, the control is thus approximated with straight lines.\par 
Using as initial guess $t_f = 3.5$ we found an optimal time $t_f^* = 3.449581$.
\par 
One can note how the number of points varies. The comparison  to the smoothness of the control or the optimality of the solution is not straightforward, however.\par 
It is arguable that the control obtained with the method is not strictly bang-bang in the sense that it can reach intermediate values on a set of measure not null. However, from a numerical point of view, for an optimality tolerance small enough the error to the true bang-bang solution should be negligible. And, possibly, the solution obtained can be refined in a later stage with the switching time as parameter since an optimal control structure is then known. The result of the refinement is likely to be negligible if the optima

\subsection{Simple Orbital Transfer Problem}\label{sec:example3}

The problem is the one of transferring a spacecraft from one circular orbit to an other with higher radius. The terminal orbit is sought in the same plane as the initial orbit, and thus the dynamics is simplified to be two-dimensional. The spacecraft is propelled with continuous and constant thrust. With the gravitational constant $\mu$, the dynamics are then,
\begin{equation}
		\mathbf{f}(\mathbf{x}, \mathbf{u}; t) = \frac{d }{dt}
		\begin{bmatrix}
			r\\
			v_r\\
			v_{\theta}\\
			m
		\end{bmatrix}
		=
			\begin{bmatrix}
			v_r\\
			\frac{v_{\theta}^2}{r} - \frac{\mu}{r^2} + \frac{F}{m} \sin u\\
			-\frac{v_r v_{\theta}}{r} + \frac{F}{m} \delta \cos u\\
			-\frac{F}{V_e}
			\end{bmatrix}
	\label{eq:ex_dynamic_orbital}
\end{equation}

State elements $r$, $v_r$, $v_{\theta}$ and $m$ denote respectively the radial position, the radial velocity, the ortho-radial velocity and the mass of the spacecraft. $F$ is the thrust force ($0 \leq F \leq F_0 = 5 N$) and $V_e$ is the exhaust velocity ($V_e = g_0 I_{sp}= 19612 m/s$). The controls are thus the steering angle $u$ and the thrust amplitude $F$. 

The initial orbit is circular of radius $r_0=20000$ km. The constraint is to be on a circular orbit of radius $r_f=42000$ km at final time. Using the cylindrical coordinate, this yields the constraints,
\begin{equation}
 \bm{\phi}(\mathbf{x}; t_f) = 
		\begin{bmatrix}
			v_r\\
			v_{\theta} - \sqrt{\frac{\mu}{r}}\\
			(r - r_f)^2
		\end{bmatrix}
	\label{eq:ex_constraint_orbital}
\end{equation}

The time of flight is fixed to $t_f = 4$ days. The initial mass is $1000$ kg. The objective is to maximize the final mass.
\begin{equation}
	J(\mathbf{x}; t_f) = m(t_f)  \rightarrow \max
\end{equation}

The final optimal mass is $m(t_f)^* = 932.15$ kg and the spacecraft make about $8$ revolutions around Earth.
The optimal control is depicted on figure~\ref{fig:ex_ctl_orbital}, and the optimal trajectory is on figure~\ref{fig:ex_traj_orbital}. As can be seen on figure~\ref{fig:ex_ctl_orbital}, the optimal control is of bang-bang type. The solver is able to find accurately the commutation points, while no insight of the control structure is initially provided. 
Such problem has usually many local optimal that can be sorted with respect to the number of revolutions. That is some solutions may have more, or less, than 8 revolutions for a constant time of flight. In general however, the best control is to thrust at the perigee (where the cost and the terminal constraints are more sensible to thrust) and coast at the apogee, as depicted in figure~\ref{fig:ex_traj_orbital}. 

\begin{figure}[ht]
\centering
\includegraphics[width=0.8\textwidth]{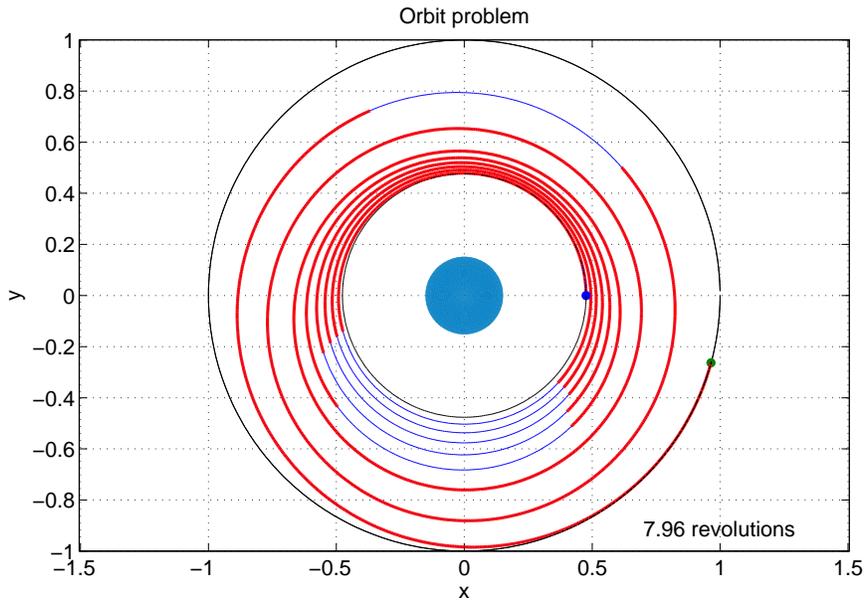}
\caption{Optimal trajectory of the orbital transfer problem.}
\label{fig:ex_traj_orbital}
\end{figure}

\begin{figure}[ht]
\centering
\includegraphics[width=0.7\textwidth]{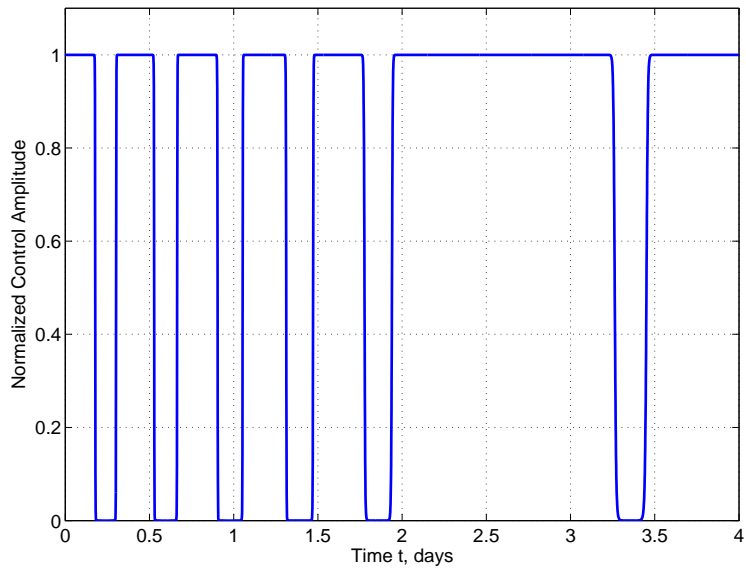}
\caption{Control of the orbital transfer problem.}
\label{fig:ex_ctl_orbital}
\end{figure}

\clearpage

\section{Conclusions} 

The solving of optimal control problems with a continuous backward-forward sweep algorithm based on a second-order variations is presented. A second order expansion of a Lagrangian provides linear updates of the control to construct a locally feedback optimal control of the problem. 
The control updates are computed following backward and forward stages. Ordinary differential equations are integrated backward around a state trajectory that has been computed forwardly in an earlier stage. The method uses an accurate continuous approximation of the ODE solutions to ensure precision of the control updates. \par
The method was demonstrated on two examples with bang-bang optimal control solution. It was shown that the solver can find accurately the switching structure of the solution, without any insight.


\bibliographystyle{unsrt}
\bibliography{paper}

\begin{thebibliography}{10}

\bibitem{ross_dido}
I.~Michael Ross and Fariba Fahroo.
\newblock Legendre pseudospectral approximations of optimal control problems.
\newblock 2003.

\bibitem{rao_gops}
A.~V. Rao, D.~A. Benson, C.~L. Darby, M.~A. Patterson, C.~Francolin, , and
  G.~T. Huntington.
\newblock Algorithm 902: Gpops, a matlab software for solving multiple-phase
  optimal control problems using the gauss pseudospectral method,.
\newblock {\em ACM Transactions on Mathematical Software}, 37(2):1--39, April
  2010.
\newblock \doi{10.1145/1731022.1731032}.

\bibitem{becerra_psopt}
V.M. Becerra.
\newblock Solving complex optimal control problems at no cost with psopt.
\newblock In {\em Proc. IEEE Multi-conference on Systems and Control}, pages
  1391--1396, September 2010.

\bibitem{betts_socs}
J.T. Betts and W.~P. Huffman.
\newblock Sparse optimal control software, socs.
\newblock Technical report, Boeing Information and Support Services, Seattle,
  Washington, 1997.

\bibitem{Betts}
J.T. Betts.
\newblock Survey of numerical methods for trajectory optimization.
\newblock {\em Journal Guidance, Control and Dynamics}, 21(2):193 -- 207, 1998.
\newblock \doi{10.2514/2.4231}.

\bibitem{Pontryaguin}
L.S. Pontryaguin.
\newblock {\em The Mathematical Theory of Optimal Processes}.
\newblock John Wiley and Sons Inc, 1962.

\bibitem{mcreynolds}
S.~R. Mc~Reynolds and A.~E.~Jr. Bryson.
\newblock A successive sweep method for solving optimal programming problems.
\newblock Technical Report AD0459518, Harvard University, 1965.

\bibitem{jacobson}
D.H. Jacobson and D.Q. Mayne.
\newblock {\em Differential Dynamic Programming}.
\newblock American Elsevier, New York, 1969.

\bibitem{mayne}
D.A. Mayne.
\newblock A second order gradient method for determining optimal trajectories
  for nonlinear discrete-time systems.
\newblock {\em Int. J. Control}, 3:85--95, 1966.

\bibitem{mitter}
Mitter S.K.
\newblock Successive approximation methods for the solution of optimal control
  problems.
\newblock {\em Automatica}, (3):135--–149, 1966.

\bibitem{Gershwin}
S.B. Gershwin and D.H. Jacobson.
\newblock A discrete-time differential dynamic programming with application to
  optimal orbit transfer.
\newblock {\em AIAA Journal}, 8(9):1616--1626, 1970.

\bibitem{dyer}
P.~Dyer and S.~R McReynolds.
\newblock Optimization of control systems with discontinuities and terminal
  constraints.
\newblock {\em IEEE Transactions on automatic Control}, 14(3), 1969.

\bibitem{whiffen.2001}
G.J. Whiffen and J.A. Sims.
\newblock Application of a novel optimal control algorithm to low-thrust
  trajectory optimization.
\newblock In {\em AAS/AIAA Space Fligth Mechanics Meeting}, Feb 2001.
\newblock AAS 01-209.

\bibitem{whiffen.2002}
G.J. Whiffen and J.A. Sims.
\newblock Application of the sdc optimal control algorithm to low-thrust escape
  and capture including fourth-body effects.
\newblock In {\em 2nd International Symposium on Low-Thrust Trajectories},
  Toulouse, June 2002.
\newblock 18-20 June.

\bibitem{lantoine}
G.~Lantoine and R.P. Russel.
\newblock A hybrid differential dynamic programming algorithm for low-thrust
  optimization.
\newblock In {\em AIAA/AAS Astrodynamics Specialist Conference and Exhibit,
  18-21 August 2008, Honolulu, Hawaii}, 2008.

\bibitem{Colombo}
C.~Colombo, M.~Vasile, and G.~Radice.
\newblock Optimal low-thrust trajectories to asteroids through an algorithm
  based on differential dynamic programming.
\newblock {\em Celestial Mechanics and Dynamical Astronomy}, 105(1):75 -- 112,
  2009.
\newblock \doi{10.1007/s10569-009-9224-3}.

\bibitem{bullock.rpt}
T.E. Bullock.
\newblock Computation of optimal controls by a method based on second
  variations.
\newblock Technical report, Stanford University, 1966.

\bibitem{bullock}
T.~Bullock and G.~Franklin.
\newblock A second-order feedback method for optimal control computations.
\newblock {\em IEEE Transactions on Automatic Control}, 12(6):666 -- 673, 1967.

\bibitem{Olympio.Phd}
J.T. Olympio.
\newblock {\em Optimisation and Optimal Control Methods for Planet Sequence
  Design of Low-Thrust Interplanetary Transfer Problems with Gravity-Assists}.
\newblock PhD thesis, Ecole des Mines de Paris, October 2008.

\bibitem{Olympio}
J.T. Olympio.
\newblock Algorithm for low thrust optimal interplanetary transfers with escape
  and capture phases.
\newblock In {\em AIAA/AAS Astrodynamics Specialist Conference and Exhibit},
  Honolulu, Hawaii, August 2008.
\newblock 2008-7363.

\bibitem{gill.technote}
P.E. Gill, W.~Murray, M.~A. Saunders, and M.H. Wright.
\newblock Some theoretical properties of an augmented lagrangian merit
  function.
\newblock Technical Report SOL 86-6R, Stanford University, Sep 1986.

\bibitem{hestenes}
M.~R. Hestenes.
\newblock Multiplier and gradient methods.
\newblock {\em Journal of Optimization Theory and Applications}, 4(5):303--320,
  1969.
\newblock \doi{10.1007/BF00927673}.

\bibitem{powell}
M.J.D. Powell.
\newblock {\em A method for nonlinear constraints in minimization problems},
  pages 283--298.
\newblock Academic Press, optimization edition, 1969.

\bibitem{kalman}
R.E. Kalman.
\newblock Contributions to the theory of optimal control.
\newblock {\em Boletin de la Sociedad Matematica Mexicana}, 5(102):102--–119,
  1960.

\bibitem{bucy.1967}
R.S. Bucy.
\newblock Global theory of the riccati equation.
\newblock {\em Journal of Computer and System Sciences}, 1(4), 1967.
\newblock \doi{10.1016/S0022-0000(67)80025-4}.

\bibitem{jacobson.ricatti}
D.H. Jacobson.
\newblock New conditions for boundedness of the solution of a matrix riccati
  differential equation.
\newblock {\em Journal of Differential Equations}, 8(2):258--263, 1970.
\newblock \doi{10.1016/0022-0396(70)90005-7}.

\bibitem{Rodriguez97trustregion}
J.F. Rodriguez, J.E. Renaud, and L.T. Watson.
\newblock Trust region augmented lagrangian methods for sequential response
  surface approximation and optimization.
\newblock {\em Journal of Mechanical Design}, 15:58--66, 1997.
\newblock \doi{10.1115/1.2826677}.

\bibitem{Conn.TrustRegion}
A.R. Conn, N.I.M. Gould, and P.L. Toint.
\newblock {\em Trust-region methods}.
\newblock MPS-SIAM Series on Optimization 1. Society for Industrial and Applied
  Mathematics, 2000.

\bibitem{Coleman93anefficient}
T.F. Coleman and A.~Liao.
\newblock An efficient trust region method for unconstrained discrete-time
  optimal control problems.
\newblock {\em Computational Optimization and Applications}, 4:47--66, 1993.

\bibitem{Stoffer}
D.~Stoffer.
\newblock Variable steps for reversible integration methods.
\newblock {\em Computing}, 55(1):1--22, 1995.
\newblock \doi{10.1007/BF02238234}.

\bibitem{varin}
J.~Laurent-Varin, J.F. Bonnans, N~Berend, M.~Haddou, and C.~Talbot.
\newblock Interior-point approach to trajectory optimization.
\newblock {\em Journal of Guidance, Control and Dynamics}, 30(5), 2007.
\newblock \doi{10.2514/1.18196}.

\bibitem{Betts.98}
J.T. Betts and W.P. Huffman.
\newblock Mesh refinement in direct transcription methods for optimal control.
\newblock {\em Optimal control applications and method}, 19(1):1 -- 21, 1998.
\newblock \doi{10.2514/2.4231}.

\bibitem{shampine}
Lawrence~F. Shampine.
\newblock Interpolation for variable order, runge-kutta methods.
\newblock {\em "Computers and Mathematics with Applications"}, 14(4):255--260,
  1987.
\newblock \doi{10.1016/0898-1221(87)90133-7}.

\end{thebibliography}

\end{document}